\theoremstyle{plain}
\newtheorem{theorem}{\textbf{Theorem}}[section]
\newtheorem{lemma}[theorem]{\textbf{Lemma}}
\newtheorem{remark}[theorem]{\textbf{Remark}}
\newtheorem{definition}[theorem]{Definition}
\newtheorem*{Acknowledgement}{\textnormal{\textbf{Acknowledgement}}}
\newtheorem{example}[theorem]{\textbf{Example}}
\begin{document}



\title[Fixed Point Results On $\theta$-metric Spaces via Simulation Functions]{Fixed Point Results On $\theta$-metric Spaces via Simulation Functions}

\author[Ankush \ Chanda]{Ankush \ Chanda$^{1}$}
\address{$^1$Department of Mathematics,
\newline \indent National Institute of Technology  Durgapur,
\newline \indent India}
\email{ankushchanda8@gmail.com}

\author[Lakshmi Kanta \ Dey]{Lakshmi Kanta \ Dey$^{2}$}
\address{$^{2}$Department of Mathematics,
\newline \indent National Institute of Technology  Durgapur,
\newline \indent India}
\email{lakshmikdey@yahoo.co.in}

\keywords{$\theta$-metric space, simulation functions, $\mathcal{Z}$-contraction, modified $\mathcal{Z}-$contraction.\\
\indent 2010 {\it Mathematics Subject Classification}.  $47$H$10$, $54$H$25$. \\
}

\begin{abstract}
In a recent article, Khojasteh et al. introduced a new class of simulation functions, $\mathcal{Z}$-contractions, with blending over known contractive conditions in the literature. Subsequently, in this paper, we extend and generalize the results on $\theta$-metric context and we discuss some fixed point results in connection with existing ones. Also, we originate the notion of modified $\mathcal{Z}-$contractions and explore the existence and uniqueness of fixed points of such functions on the said spaces. Finally we include examples to instantiate our main results.
\end{abstract}

\maketitle

\section{Introduction}
\baselineskip .55 cm
With extensive and manifold applications, fixed point theory has been one of the most influential research topics in various field of engineering and science. The most incredible result in this direction was stated by Banach, known as the Banach contraction principle \cite{ba}. This remarkable result has been generalized and extended in various abstract spaces using different conditions. However, the prospect of fixed point theory charmed many researchers and so there is a vast literature available for readers \cite{Ch,De,DD,DM,Rk1,GRRS}.

One of the most impressive generalizations of the notion of a metric is the concept of a fuzzy metric. Motivated from the definition of fuzzy metric spaces, recently Khojasteh et al. \cite{KKR} introduced $\theta-$metric by replacing the triangle inequality with a more generalized inequality.

In recent times, Khojasteh et al. \cite{KSR} introduced the notion of $\mathcal{Z}$-contraction by using a new class of auxiliary functions called simulation functions. This kind of functions have attracted much attention because they are useful to express a great family of contractivity conditions that were well known in the field of fixed point theory. Later on, Olgun et al. \cite{OBA} provided a new class of Picard operator on complete metric spaces using the concept of generalized $\mathcal{Z}-$contraction. In this exciting context, there are a lot of developments have been done in recent times \cite{NV,RKRM}.

In this manuscript, we use $\mathcal{Z}-$contractions to obtain existence and uniqueness fixed point results on $\theta-$metric spaces. Also, we introduce the concept of modified $\mathcal{Z}-$contractions there and go on to derive a fixed point result using them in the said spaces. Our main results are equipped with competent examples.

This document unfolds with preliminaries section, where we review some definitions, examples and notable
results that are involved in the sequel. The main results section comprises of some lemmas and fixed point results. These results extend, unify and generalize several results in the existing literature. Further we furnish some non-trivial examples to elicit the usability of the obtained theorems.

\section{Preliminaries}
At the outset, we dash some basic definitions and fundamental results off here. In the rest of this paper, $\mathbb{N}$ will stand for the set of all non-negative integers and $\mathbb{R}$ will denote the set of all real numbers.
 
Let $T: X\rightarrow X$ be a self-mapping. We say $x \in X$ is a fixed point of $T$ if $Tx = x$.
 
The following notion of simulation functions was first introduced by Khojasteh et al. in \cite{KSR}.
\begin{definition} \cite{KSR} \label{d1}
Let $\zeta: [0, \infty) \times [0, \infty) \rightarrow \mathbb{R}$ be a mapping. Then $\zeta$ is called a simulation function, if it satisfies:
\begin{enumerate}
\item [($\zeta1$)]
$\zeta(0,0)=0 $,
\item [($\zeta2$)]
$\zeta(t,s)< s-t$ for all $s,t>0 $,
\item [($\zeta3$)]
if $\{t_n\},\{s_n\}$ are sequences defined in $(0, \infty)$ such that $lim_{n \rightarrow \infty}t_n= lim_{n \rightarrow \infty}s_n>0$, then 
\[\limsup_{n \rightarrow \infty} \zeta(t_n, s_n)< 0.\]
\end{enumerate}
\end{definition}
The authors provided a wide range of examples of simulation functions to emphasize the promising applicability to the literature of fixed point theory. We list a few here.

\begin{example} \cite{KSR}
Let $\zeta_i: [0, \infty) \times [0, \infty), i=1,2$  be defined by:
\begin{enumerate}
\item
$\zeta_1(t, s)= \frac{s}{s+1}-t$ for all $t,s \in  [0, \infty).$
\item
$\zeta_2(t, s)= \eta(s)-t$ for all $t,s \in  [0, \infty)$, where $\eta: [0, \infty) \rightarrow [0, \infty)$ is an upper semi continuous mapping such that $\eta(t)< t$ for all $t>0$ and $\eta(0)=0$.
\item $\zeta_3(t, s)= s -\phi(s)-t$ for all $s, t\in [0,\infty)$ where, $\phi: [0,\infty) \rightarrow [0,\infty)$ is a continuous function such that $\phi(t) = 0 \Leftrightarrow t = 0$.
\end{enumerate}
\end{example}
The set of all simulation functions is denoted by $\mathcal{Z}.$

\begin{definition} \cite{KSR} \label{d2}
Let $T: X \rightarrow X$ be a self-mapping and $\zeta \in \mathcal{Z}$. Then $T$ is called a $\mathcal{Z}-$contraction with respect to $\zeta$, if 
\[ \zeta(d(Tx,Ty),d(x,y)) \geq 0\] holds for all $x,y \in X.$
\end{definition}
The Banach contraction is a perfect example of $\mathcal{Z}-$contraction. It satisfies the previous non-negativity restriction by taking $\zeta(t,s)= \lambda s- t$, where $\lambda \in [0,1),$ as the corresponding simulation function.

Despite the above examples, there are several other examples of simulation functions and $\mathcal{Z}-$contractions, which can be found on \cite{KSR}.
\begin{remark} [cf. \cite{KSR}] \label{r1}
It can be easily said from the definition of the simulation function that $\zeta(t,s)<0$ for all $t \geq s>0.$ So, if $T$ is a $\mathcal{Z}-$contraction with respect to $\zeta \in \mathcal{Z}$, then \[d(Tx,Ty)< d(x,y)\] whenever $x \neq y,$ for all $x,y \in X$.
This leads us to the conclusion that every $\mathcal{Z}-$contraction is contractive and hence continuous.
\end{remark}

For our purposes, we need to enunciate the ideas of $B$-actions and $\theta$-metrics here. In 2013, Khojasteh et al. \cite{KKR} proposed the notion of $\theta-$metric as a proper generalization of a metric.
\begin{definition} \cite{KKR} \label{d3}
Let $\theta : [0, \infty) \times [0, \infty) \rightarrow [0, \infty)$ be a continuous mapping with respect to both the variables. Let Im$(\theta)=\{\theta(s,t): s\geq 0, t\geq 0\}$. The mapping $\theta$ is called an $B$-action if and only if it satisfies the following conditions:
\begin{enumerate}
\item[(B1)]
$\theta(0,0)=0~~ and ~~\theta(s,t)= \theta(t,s)$  for all $ s,t \geq 0,$
\item[(B2)]
\begin{eqnarray*}
\theta(s,t) < \theta(u,v) & \Rightarrow & 
\begin{cases}
\text{either} ~~s <u, t \leq v\\
\text{or} ~~s \leq u, t < v,
\end{cases}
\end{eqnarray*}
\item[(B3)]
for each $r \in Im(\theta)$ and for each $s \in [0,r],$ there exists $t \in [0,r]$ such that $\theta
(t,s)=r,$
\item[(B4)]
$\theta(s,0) \leq s,$ for all $s> 0.$
\end{enumerate}
\end{definition}

\begin{example} \cite{KKR}
The subsequent examples illustrate the definition.
\begin{enumerate}
\item
$\theta_1(s,t)= \frac{ts}{1+ts}.$
\item
$\theta_2(s,t)= t+s+ \sqrt{ts}.$
\end{enumerate}
\end{example}
The set of all $B$-actions is denoted by $Y$.

The idea of $B-$action has been very much functional to formulate the notion of $\theta-$metric spaces \cite{KKR}. We here recall the definition of the said spaces.

\begin{definition} \cite{KKR} \label{d4}
Let $X$ be a non-empty set. A mapping $d_\theta: X \times X \rightarrow [0,\infty)$ is called a $\theta$-metric on $X$ with respect to $B-$action $\theta \in Y$ if $d_\theta$ satisfies the following:
\begin{enumerate}
\item[($\theta1$)]
$ d_\theta(x,y)=0$ if and only if $x=y,$
\item[($\theta2$)]
$ d_\theta(x,y)= d_\theta(y,x)$, for all $x,y \in X,$
\item[($\theta3$)]
$ d_\theta(x,y) \leq \theta(d_\theta(x,z),d_\theta(z,y)),$ for all $x,y,z \in X.$

\end{enumerate}
Then the pair $(X,d_\theta)$ is called a $\theta$-metric space.
\end{definition}
\begin{example} \cite{KKR}
Here we provide a non-trivial example of $\theta-$metric space.

Let $X= \{ a,b,c\}$ and $d_\theta: X \times X \rightarrow [0,\infty)$ is defined as:
\[d_\theta(x,y)= 5, d_\theta(y,z)= 12, d_\theta(z,x)=13, d_\theta(x,y)=d_\theta(y,x),\] \[ d_\theta(y,z)=d_\theta(z,y), d_\theta(z,x)=d_\theta(x,z), d_\theta(x,x)=d_\theta(y,y)=d_\theta(z,z)=0.\]
Taking $\theta(s,t)= \sqrt{s^2+t^2},$ the mapping $d_\theta$ forms a $\theta$-metric. And hence the pair $(X,d_\theta)$ is a $\theta$-metric space.
\end{example}

\begin{remark} [cf. \cite{KKR}]
If $(X,d_\theta)$ is a $\theta$-metric space and $\theta(s,t)=s+t,$ then $(X,d_\theta)$ is a metric space.
Also we mention that a metric space is included in the class of $\theta$-metric spaces if we consider the $\theta$-metric as $\theta(s,t)=s+t$. 
\end{remark}
For further terminology and derived results, we refer to \cite{KKR}.

\section{Main Results}
In this section, we prove some fixed point theorems for self-mappings via simulation functions owing to the concept of $\theta-$metric spaces and also we give illustrative examples. Before all else, we start with noting down following lemmas which will be crucial to our main results.
\begin{lemma} \label{l1}
If $(X,d_\theta)$ be any complete $\theta-$metric space and $T:X \rightarrow X$ be a $\mathcal{Z}-$contraction with respect to $\zeta \in \mathcal{Z}$, then $T$ is an asymptotically regular mapping at every $x \in X.$
\end{lemma}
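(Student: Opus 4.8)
The plan is to fix an arbitrary $x \in X$, set $x_n = T^n x$ for $n \in \mathbb{N}$, and prove that $d_\theta(x_n, x_{n+1}) \to 0$ as $n \to \infty$, which is exactly the assertion that $T$ is asymptotically regular at $x$. First I would dispose of the degenerate case: if $x_{n_0} = x_{n_0+1}$ for some $n_0$, then $x_{n_0}$ is a fixed point of $T$, so $x_n = x_{n_0}$ for all $n \ge n_0$ and $d_\theta(x_n,x_{n+1}) = 0$ eventually, giving asymptotic regularity at once. Hence I may assume $x_n \neq x_{n+1}$ for every $n$, and then by ($\theta1$) we have $d_\theta(x_n, x_{n+1}) > 0$ for all $n$.

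Next I would appeal to Remark \ref{r1}: since $T$ is a $\mathcal{Z}$-contraction and $x_n \neq x_{n+1}$, it follows that $d_\theta(x_{n+1}, x_{n+2}) = d_\theta(Tx_n, Tx_{n+1}) < d_\theta(x_n, x_{n+1})$ for every $n$. Thus $\{d_\theta(x_n, x_{n+1})\}$ is a strictly decreasing sequence in $(0,\infty)$, so it is bounded below and converges to some $r \ge 0$.

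The crux is to show $r = 0$. Suppose instead $r > 0$. Set $t_n = d_\theta(x_{n+1}, x_{n+2})$ and $s_n = d_\theta(x_n, x_{n+1})$; these are sequences in $(0,\infty)$ with $\lim_{n\to\infty} t_n = \lim_{n\to\infty} s_n = r > 0$. On the one hand, applying the $\mathcal{Z}$-contraction condition of Definition \ref{d2} to the pair $(x_n, x_{n+1})$ yields $\zeta(t_n, s_n) = \zeta(d_\theta(Tx_n, Tx_{n+1}), d_\theta(x_n, x_{n+1})) \ge 0$ for each $n$, hence $\limsup_{n\to\infty} \zeta(t_n, s_n) \ge 0$. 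On the other hand, property ($\zeta3$) of the simulation function $\zeta$ forces $\limsup_{n\to\infty} \zeta(t_n, s_n) < 0$. This contradiction shows $r = 0$, i.e. $\lim_{n\to\infty} d_\theta(T^n x, T^{n+1} x) = 0$; since $x$ was arbitrary, $T$ is asymptotically regular at every point of $X$.

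I do not anticipate a genuine obstacle here: the argument is the standard ``monotone orbit distances squeezed by the simulation function.'' Worth noting is that neither completeness of $(X, d_\theta)$ nor the $B$-action axioms beyond ($\theta1$)--($\theta2$) are actually used for this lemma. The only point that needs a little care is separating out the case $x_n = x_{n+1}$ before invoking Remark \ref{r1}, since the strict inequality there requires distinct arguments.
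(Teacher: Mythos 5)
Your proof is correct and follows essentially the same route as the paper's: show the orbit distances $d_\theta(T^n x, T^{n+1}x)$ are strictly decreasing via Remark \ref{r1}, hence convergent to some $r \ge 0$, and then rule out $r > 0$ by pitting the $\mathcal{Z}$-contraction inequality against property ($\zeta3$). Your treatment is in fact slightly more careful than the paper's (you handle the stabilizing-orbit case explicitly and state the strict decrease correctly, where the paper's displayed inequality contains a typo), but the argument is the same.
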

\begin{proof}
Let $x \in X$ be any arbitrary element. Now, without loss of generality, we take $T^n x \neq T^{n+1}x,$ for all $n \in \mathbb{N}.$
Taking into account Remark \ref{r1}, we have, 
\begin{eqnarray*}
d_\theta(T^n x, T^{n+1}x) < d_\theta(T^n x, T^{n+1}x)
\end{eqnarray*}
for all $n \in \mathbb{N}$. 
So $\{d_\theta(T^n x, T^{n+1} x)\}$ is a decreasing sequence of non-negative reals. Thus there exists a $r\geq 0$ such that $lim_{n\rightarrow \infty} d_\theta(T^n x, T^{n+1} x)= r.$ Our claim is that $r=0.$
Since $T$ is a $\mathcal{Z}-$contraction with respect to $\zeta$, we have
\begin{eqnarray*}
0 & \leq & \limsup_{n \rightarrow \infty}\zeta(d_\theta(T^{n+1} x,T^n x), d_\theta(T^n x,T^{n-1} x))\\
& < & 0.
\end{eqnarray*}
This contradiction proves that $r=0$ and hence $\lim_{n \rightarrow \infty}d_\theta(T^n x, T^{n+1} x)=0$.
So $T$ is asymptotically regular mapping at every $x \in X.$
\end{proof}
\begin{lemma} \label{l2}
Let $(X,d_\theta)$ be any complete $\theta-$metric space and $T:X \rightarrow X$ be a $\mathcal{Z}-$contraction with respect to $\zeta \in \mathcal{Z}$. Then if $T$ has any fixed point in $X$, then it is unique.
\end{lemma}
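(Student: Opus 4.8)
The plan is to argue by contradiction in the standard way: suppose $x$ and $y$ are two distinct fixed points of $T$, so $Tx = x$, $Ty = y$, and $d_\theta(x,y) > 0$. Since $T$ is a $\mathcal{Z}$-contraction with respect to $\zeta$, the defining inequality gives
\[
0 \leq \zeta\big(d_\theta(Tx, Ty), d_\theta(x,y)\big) = \zeta\big(d_\theta(x,y), d_\theta(x,y)\big).
\]
The key observation is that with $t = s = d_\theta(x,y) > 0$, property ($\zeta 2$) of Definition \ref{d1} yields $\zeta(t,s) < s - t = 0$, so $\zeta\big(d_\theta(x,y), d_\theta(x,y)\big) < 0$, contradicting the displayed inequality. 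Hence $d_\theta(x,y) = 0$, and by ($\theta 1$) we conclude $x = y$.

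Alternatively, and perhaps more in the spirit of the preceding lemma, I would invoke Remark \ref{r1} directly: since every $\mathcal{Z}$-contraction is contractive, $x \neq y$ forces $d_\theta(Tx, Ty) < d_\theta(x,y)$; but $Tx = x$ and $Ty = y$ give $d_\theta(Tx, Ty) = d_\theta(x,y)$, so $d_\theta(x,y) < d_\theta(x,y)$, which is absurd. Either route is short; the second has the virtue of reusing the machinery already set up.

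There is essentially no hard part here — the statement is a routine uniqueness argument that does not even use completeness of $(X, d_\theta)$ or the $B$-action axioms beyond ($\theta 1$). The only point requiring a modicum of care is making sure the case $d_\theta(x,y) > 0$ is genuinely the only case to rule out, which is immediate from ($\theta 1$), and correctly reading off the strict inequality from ($\zeta 2$) (or from the contractivity noted in Remark \ref{r1}). I would write the proof using the contractivity formulation for brevity and consistency with Lemma \ref{l1}.
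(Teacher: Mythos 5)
Your proposal is correct and follows essentially the same route as the paper: the contradiction comes from Remark \ref{r1} (equivalently, from $(\zeta 2)$ applied with $t=s=d_\theta(x,y)>0$). If anything, your version is tighter --- the paper's written proof first runs a chain of $(\theta 3)$/$(B4)$ inequalities that only establishes $d_\theta(u,v)\leq d_\theta(u,v)$ before invoking the same contractivity from Remark \ref{r1}, and that chain does no real work.
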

\begin{proof}
Let $u \in X$ be any fixed point of $T$. We take $v \in X$ as another fixed point of $T$ with $u \neq v$. 
Therefore, $Tu=u$ and $Tv=v.$ Now by using $(B4)$ and $(\theta3),$ we obtain
\begin{eqnarray*}
d_\theta(u,v) & =& d_\theta(Tu,Tv)\\
& \leq & \theta(d_\theta(Tu,u),d_\theta(u,Tv))\\
& = & \theta(d_\theta(u,u),d_\theta(u,Tv))\\
& \leq & d_\theta(u,Tv)\\
& \leq & \theta(d_\theta(u,v),d_\theta(v,Tv))\\
& \leq & \theta(d_\theta(u,v),d_\theta(v,v))\\
& \leq & d_\theta(u,v).
\end{eqnarray*}
In view of Remark \ref{r1}, above inequality yields a contradiction and hence proves result.
\end{proof}

The first main result of this article is the following one.
\begin{theorem}\label{t1}
Let $(X,d_\theta)$ be any complete $\theta-$metric space and $T:X \rightarrow X$ be a $\mathcal{Z}-$contraction with respect to $\zeta \in \mathcal{Z}$. Then $T$ has a unique fixed point $u$ in $X$ and for every $x_0 \in X,$ the Picard sequence $\{x_n\}$ converges to the fixed point of $T$.
\end{theorem}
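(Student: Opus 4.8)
The plan is to follow the classical three-part scheme for Banach-type fixed point theorems, leaning on the two lemmas already established. By Lemma \ref{l1}, for an arbitrary $x_0 \in X$ the Picard iterates $x_n = T^n x_0$ satisfy $d_\theta(x_n, x_{n+1}) \to 0$; by Lemma \ref{l2}, any fixed point is automatically unique. So the only substantive work remaining is to prove that $\{x_n\}$ is a Cauchy sequence, invoke completeness to get a limit $u$, and check that $u$ is fixed by $T$.

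First I would dispense with the trivial case: if $x_{n_0} = x_{n_0+1}$ for some $n_0$, then $x_{n_0}$ is already a fixed point and the Picard sequence is eventually constant, so assume $x_n \neq x_{n+1}$ for all $n$. For the Cauchy property I would argue by contradiction in the standard Khojasteh--style way: suppose $\{x_n\}$ is not Cauchy, so there exist $\varepsilon > 0$ and subsequences with $d_\theta(x_{m_k}, x_{n_k}) \geq \varepsilon$ while $d_\theta(x_{m_k}, x_{n_k - 1}) < \varepsilon$ (choosing $n_k$ minimal). The aim is to show $d_\theta(x_{m_k}, x_{n_k}) \to \varepsilon$ and $d_\theta(x_{m_k - 1}, x_{n_k - 1}) \to \varepsilon$, then apply $(\zeta3)$ with $t_k = d_\theta(x_{m_k}, x_{n_k})$, $s_k = d_\theta(x_{m_k - 1}, x_{n_k - 1})$ together with the $\mathcal{Z}$-contraction inequality $\zeta(t_k, s_k) \geq 0$ to reach $0 \leq \limsup_k \zeta(t_k, s_k) < 0$, a contradiction. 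Once $\{x_n\}$ is Cauchy, completeness gives $x_n \to u$ for some $u \in X$. To see $Tu = u$, use continuity of $T$ (Remark \ref{r1} says every $\mathcal{Z}$-contraction is continuous), so $x_{n+1} = Tx_n \to Tu$, and by uniqueness of limits (which holds in a $\theta$-metric space) $Tu = u$. Uniqueness of the fixed point is then immediate from Lemma \ref{l2}.

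The main obstacle is the estimate that $d_\theta(x_{m_k - 1}, x_{n_k - 1}) \to \varepsilon$, because in a metric space one would use the triangle inequality freely, whereas here one only has the weaker $B$-action inequality $(\theta3)$ with the extra structural axioms $(B1)$--$(B4)$ at one's disposal. Concretely, from $d_\theta(x_{m_k}, x_{n_k}) \leq \theta\bigl(d_\theta(x_{m_k}, x_{n_k - 1}), d_\theta(x_{n_k - 1}, x_{n_k})\bigr)$ and the fact that the second argument tends to $0$, I would use continuity of $\theta$ in both variables together with $(B1)$ (so that $\theta(s, 0) = \theta(0, s)$) and $(B4)$ to pass to the limit and recover $\liminf_k d_\theta(x_{m_k}, x_{n_k - 1}) \geq$ the bound forced by $\varepsilon$; combined with the reverse inequality this pins the relevant quantities to $\varepsilon$. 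Similarly, chaining $x_{m_k - 1} \to x_{m_k} \to x_{n_k} \to x_{n_k - 1}$ through $(\theta3)$ twice and using asymptotic regularity to kill the endpoint terms should sandwich $d_\theta(x_{m_k - 1}, x_{n_k - 1})$ between quantities converging to $\varepsilon$. Care is needed here since $\theta$ is not assumed monotone in an obvious usable form beyond $(B2)$, so one must phrase every limiting argument through continuity of $\theta$ rather than through inequalities between $\theta$-values; this is where I expect the bookkeeping to be most delicate.
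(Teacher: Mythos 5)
Your proposal is correct, and its skeleton (Picard sequence, asymptotic regularity from Lemma \ref{l1}, Cauchy-by-contradiction via $(\zeta3)$, completeness, uniqueness from Lemma \ref{l2}) matches the paper's. You diverge from the paper in two places, both defensibly. First, for the Cauchy step the paper does \emph{not} use the classical negation-of-Cauchy construction you describe; it first proves that $\{x_n\}$ is bounded (by a separate contradiction argument with a subsequence satisfying $d_\theta(x_{n_{k+1}},x_{n_k})>1$) and then works with the diameter sequence $C_n=\sup\{d_\theta(x_i,x_j): i,j\geq n\}$, extracting indices $m_k>n_k\geq k$ with $d_\theta(x_{m_k},x_{n_k})\to C$ and showing $d_\theta(x_{m_k-1},x_{n_k-1})\to C$ before invoking $(\zeta3)$. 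Your $\varepsilon$-minimal-index construction reaches the same pair of equal limits directly and makes the boundedness lemma unnecessary, which shortens the argument by roughly half; the price is exactly the bookkeeping you flag, namely passing $d_\theta(x_{m_k},x_{n_k-1})<\varepsilon$ and the vanishing consecutive distances through $\theta$. Note that this is less delicate than you fear: the contrapositive of $(B2)$ already gives that $\theta$ is non-decreasing in each argument (the paper uses this implicitly when it writes $\theta(d_\theta(x_{n_{k+1}},x_{n_{k+1}-1}),d_\theta(x_{n_{k+1}-1},x_{n_k}))\leq\theta(d_\theta(x_{n_{k+1}},x_{n_{k+1}-1}),1)$), and then continuity of $\theta$ together with $(B1)$ and $(B4)$ closes the sandwich exactly as you sketch. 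Second, to show the limit $z$ is fixed, the paper argues by contradiction using $(\zeta2)$ to get $0\leq\limsup_n\zeta(d_\theta(Tx_n,Tz),d_\theta(x_n,z))\leq -d_\theta(z,Tz)<0$, whereas you use continuity of $T$ (Remark \ref{r1}) plus uniqueness of limits; your route is valid, since $d_\theta(u,v)\leq\theta(d_\theta(u,x_n),d_\theta(x_n,v))\to\theta(0,0)=0$ guarantees unique limits in a $\theta$-metric space, and is arguably cleaner.
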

\begin{proof}
Let $x_0$ be any arbitrary point and $\{x_n\}$ be the corresponding Picard sequence, i.e., $x_n=Tx_{n-1}$ for all $n \in \mathbb{N}$. We claim that the sequence $\{x_n\}$ is bounded. 

Reasoning by contradiction, we assume that, $\{x_n\}$ is unbounded. So, there exists a subsequence $\{x_{n_k}\}$ of $\{x_n\}$ such that $n_1=1$ and for each $k \in \mathbb{N}, n_{k+1}$ is minimum integer such that
\[d_\theta(x_{n_{k+1}},x_{n_k}) > 1\]
and 
\begin{eqnarray}
d_\theta(x_m,x_{n_k}) & \leq & 1
\end{eqnarray}
for $n_k \leq m \leq n_{k+1}-1.$
Now, using the triangle inequality $(\theta3)$ and $(3.1)$, we have
\begin{eqnarray}
1 & < & d_\theta(x_{n_{k+1}},x_{n_k}) \nonumber\\
& \leq & \theta(d_\theta(x_{n_{k+1}},x_{n_{k+1}-1}),d_\theta(x_{n_{k+1}-1},x_{n_k})) \nonumber\\
& \leq & \theta(d_\theta(x_{n_{k+1}},x_{n_{k+1}-1}),1).
\end{eqnarray}
Letting $k \rightarrow \infty$ on both sides of $(3.2)$ and then using Lemma \ref{l1} and $(B4)$, we deduce that,
 \[d_\theta(x_{n_{k+1}},x_{n_k}) \rightarrow 1.\]
On the other hand, using $(\theta3)$ and $(3.1)$, we derive that 
\begin{eqnarray*}
1 & < & d_\theta(x_{n_{k+1}},x_{n_k}) \\
& \leq & d_\theta(x_{n_{k+1}-1},x_{n_k-1})\\
& \leq & \theta(d_\theta(x_{n_{k+1}-1},x_{n_k}),d_\theta(x_{n_k},x_{n_k-1}))\\
& \leq & \theta(1,d_\theta(x_{n_k},x_{n_k-1})).
\end{eqnarray*}
So, as $k \rightarrow \infty$, we get, \[d_\theta(x_{n_{k+1}-1},x_{n_k-1}) \rightarrow 1.\]
Since $T$ is a $\mathcal{Z}-$contraction with respect to $\zeta \in \mathcal{Z},$ we derive that
\begin{eqnarray*}
0 & \leq & lim sup_{k \rightarrow \infty} \zeta(d_\theta(Tx_{n_{k+1}-1},Tx_{n_k-1}),d_\theta(x_{n_{k+1}-1},x_{n_k-1}))\\
& = & lim sup_{k \rightarrow \infty} \zeta(d_\theta(x_{n_{k+1}},x_{n_k}),d_\theta(x_{n_{k+1}-1},x_{n_k-1}))\\
& < & 0,
\end{eqnarray*}
and we arrive at a contradiction.
So, the Picard sequence $\{x_n\}$ is bounded.

Now we will show that  $\{x_n\}$ is Cauchy. For this, let \[C_n= sup\{d_\theta(x_i,x_j): i,j \geq n\}.\]
Note that $\{C_n\}$ is a decreasing sequence of non-negative reals. Thus there exists a $C\geq 0$ such that $lim_{n\rightarrow \infty} C_n=C.$ Our claim is that $C=0.$ Let us suppose that $C>0.$
Then by the definition of $C_n$, for every $k \in \mathbb{N},$ there exists $n_k, m_k$ such that $m_k> n_k \geq k$ and
\begin{eqnarray*}
C_k - \frac{1}{k} & < d_\theta(x_{m_k},x_{n_k}) & \leq C_k. 
\end{eqnarray*}
Letting $k \rightarrow \infty$ in the above inequality, we get
\begin{eqnarray*}
lim_{k \rightarrow \infty}d_\theta(x_{m_k},x_{n_k})=C.
\end{eqnarray*}
Now,
\begin{eqnarray*}
d_\theta(x_{m_k},x_{n_k}) & \leq & d_\theta(x_{m_k-1},x_{n_k-1})\\
& \leq &  \theta(d_\theta(x_{m_k-1},x_{m_k}),d_\theta(x_{m_k},x_{n_k-1}))\\
& \leq &  \theta(d_\theta(x_{m_k-1},x_{m_k}),\theta(d_\theta(x_{m_k},x_{n_k}),d_\theta(x_{n_k},x_{n_k-1}))).
\end{eqnarray*}
Letting  $k \rightarrow \infty$ in the previous inequality and applying $(B4)$, we derive
\begin{eqnarray}
C & \leq & \lim_{k \rightarrow \infty}d_\theta(x_{m_k-1},x_{n_k-1}) \nonumber \\
& \leq & \theta(0,\theta(d_\theta(x_{m_k},x_{n_k}),d_\theta(x_{n_k},x_{n_k-1}))) \nonumber \\
& \leq & \theta(d_\theta(x_{m_k},x_{n_k}),d_\theta(x_{n_k},x_{n_k-1})).
\end{eqnarray}
Again taking limit as $k \rightarrow \infty$ in $(3.3)$ and using $(B4)$, we get
\begin{eqnarray*}
C & \leq &\lim_{k \rightarrow \infty}d_\theta(x_{m_k-1},x_{n_k-1}) \\
& \leq & \theta(0,C)\\
& \leq & C.
\end{eqnarray*}
As a consequence, 
\[ lim_{k \rightarrow \infty} d_\theta(x_{m_k-1},x_{n_k-1})=C.\]
Now since $T$ is a $\mathcal{Z}-$contraction with respect to $\zeta \in \mathcal{Z},$ we derive that
\begin{eqnarray*}
0 & \leq & lim sup_{k \rightarrow \infty} \zeta(d_\theta(x_{m_k-1},x_{n_k-1}),d_\theta(x_{m_k},x_{n_k}))\\
& < & 0,
\end{eqnarray*}
which is a contradiction. Consequently, $\{x_n\}$ is Cauchy.

Since $(X,d_\theta)$ is complete, there exists some $z \in X$ such that $lim_{n \rightarrow \infty} x_n=z.$

Now we show that $z$ is a fixed point of $T.$ Conversely suppose, $Tz \neq z.$ Then $d_\theta(z,Tz)>0.$
Again,
\begin{eqnarray*}
0 & \leq & lim sup_{n \rightarrow \infty} \zeta(d_\theta(Tx_n,Tz),d_\theta(x_n,z))\\
& \leq & lim sup_{n \rightarrow \infty} [d_\theta(x_n,z)-d_\theta(x_{n+1},Tz)]\\
& = & -d_\theta(z,Tz).
\end{eqnarray*}
This contradiction proves that $d_\theta(z,Tz)=0,$ and hence, $Tz=z.$ So we can conclude that $z$ is a fixed point of $T.$ Uniqueness is guaranteed from Lemma \ref{l2}. 
\end{proof}
Now we validate our fixed point result by the following examples.
\begin{example}
Let $X= [0,1]$ be endowed with the Euclidean metric $d_\theta(x,y)=|x-y|$. Also we take $\theta(s,t)=s+t+st.$

We define a mapping $T: X \rightarrow X$ by $Tx = \frac{x}{a}+b,$ where $a > 1, x \in X$ and $b+\frac{1}{a} <1$.

So we have,
\begin{eqnarray*}
d_\theta(Tx,Ty) & = & |Tx-Ty|\\
&=& |\frac{x}{a}+b-\frac{y}{a}-b| \\
&=& \frac{1}{a}|x-y|.
\end{eqnarray*}
We claim that $T$ is a $\mathcal{Z}-$contraction with respect to the simulation function $\zeta(t,s)=\lambda s - t$, where $\lambda > \frac{1}{a},$  for all $t,s \in [0, \infty)$.

So we have,
\begin{eqnarray*}
\zeta(d_\theta(Tx,Ty),d_\theta(x,y)) & = & \lambda d_\theta(x,y)- d_\theta(Tx,Ty)\\
& = & \lambda |x-y|- \frac{1}{a}|x-y|\\
& = & (\lambda - \frac{1}{a}) |x-y|\\
& \geq & 0.
\end{eqnarray*}
Taking into account Theorem \ref{t1} we get, $T$ has a unique fixed point and it is $u=\frac{ab}{a-1}.$

Since $b+\frac{1}{a} <1,$ it is ensured that $u \in X.$
\end{example}

\begin{example}
Let $X= [0,1]$ be endowed with the Euclidean metric and $\theta(s,t)=s+t+st.$

We define a mapping $T: X \rightarrow X$ by $Tx = \frac{1}{1+x}, $ $ x \in X.$

Our claim is that $T$ is a $\mathcal{Z}-$contraction with respect to the simulation function $\zeta(t,s)=\frac{s}{s+1}-t,$ for all $t,s \in [0, \infty)$.
 
So we have,
\begin{eqnarray*}
\zeta(d_\theta(Tx,Ty),d_\theta(x,y))& = & \frac{d_\theta(x,y)}{d_\theta(x,y)+1}-d_\theta(Tx,Ty)\\
& = & \frac{|x-y|}{|x-y|+1}- |\frac{1}{x+1}-\frac{1}{y+1}|\\
& = & \frac{|x-y|}{|x-y|+1}- \frac{|x-y|}{|x+1||y+1|}\\
& = & |x-y|(\frac{1}{|x-y|+1}- \frac{1}{|x+1||y+1|})\\
& \geq & 0.
\end{eqnarray*}
Hence applying Theorem \ref{t1}, $T$ has a unique fixed point and it is $u=\frac{\sqrt 5 -1}{2} \in X.$

\end{example}
Here we introduce the new class of modified $\mathcal{Z}-$contractions.
\begin{definition} \label{d5}
Let $T: X \rightarrow X$ be a mapping and $\zeta \in \mathcal{Z}$. Then $T$ is called a modified $\mathcal{Z}-$contraction with respect to $\zeta$, if it satisfies:
\[ \zeta(d_\theta(Tx,Ty),M(x,y)) \geq 0\] for all $x,y \in X,$
where, 
\begin{eqnarray*}
M(x,y) & = & max\{d_\theta(x,y),d_\theta(x,Tx),d_\theta(y,Ty)\}.
\end{eqnarray*}
\begin{example}
Let $X= [0,1]$ be endowed with the Euclidean metric and $\theta(s,t)=s+t+st.$
We define a mapping $T: X \rightarrow X$ by
\begin{eqnarray*}
Tx & = & 
\begin{cases}
\frac{1}{7},~~ x \in S_1=[0,\frac{1}{2}), \\
\frac{2}{7},~~  x \in S_2=[\frac{1}{2},1].
\end{cases}
\end{eqnarray*}
Then $T$ is a modified $\mathcal{Z}-$contraction with respect to the simulation function $\zeta(t,s)= \frac{7}{8}s - t.$
\end{example}
\end{definition}
Now we deliver one of our main results related to modified $\mathcal{Z}-$contraction on the context of $\theta-$metric spaces. This theorem assures us about the existence and uniqueness of the fixed point of a modified $\mathcal{Z}-$contraction. The subsequent lemma forms the basis for our result.
\begin{lemma} \label{l3}
Let $(X,d_\theta)$ be any complete $\theta-$metric space and $T:X \rightarrow X$ be a modified $\mathcal{Z}-$contraction with respect to $\zeta \in \mathcal{Z}$. Then if $T$ has any fixed point in $X$, it is unique.
\end{lemma}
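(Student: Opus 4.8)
The plan is to proceed by contradiction, in close parallel with the proof of Lemma~\ref{l2}, but taking advantage of the fact that the auxiliary quantity $M(x,y)$ degenerates when $x$ and $y$ are both fixed points. Suppose $u,v\in X$ are fixed points of $T$ with $u\neq v$; then by $(\theta1)$ we have $d_\theta(u,v)>0$.

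First I would evaluate $M(u,v)$. Since $Tu=u$ and $Tv=v$, property $(\theta1)$ gives $d_\theta(u,Tu)=d_\theta(u,u)=0$ and $d_\theta(v,Tv)=d_\theta(v,v)=0$, so
\[
M(u,v)=\max\{d_\theta(u,v),\,d_\theta(u,Tu),\,d_\theta(v,Tv)\}=\max\{d_\theta(u,v),0,0\}=d_\theta(u,v).
\]
In contrast to Lemma~\ref{l2}, no appeal to $(B4)$ or $(\theta3)$ is needed here, because $M(u,v)$ reduces to $d_\theta(u,v)$ outright.

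Next I would insert this into the defining inequality of a modified $\mathcal{Z}$-contraction, obtaining
\[
0\le \zeta\bigl(d_\theta(Tu,Tv),\,M(u,v)\bigr)=\zeta\bigl(d_\theta(u,v),\,d_\theta(u,v)\bigr).
\]
Since $d_\theta(u,v)>0$, Remark~\ref{r1} (equivalently, condition $(\zeta2)$ read with $t=s$) gives $\zeta\bigl(d_\theta(u,v),d_\theta(u,v)\bigr)<0$, which contradicts the displayed line. Hence $u=v$, proving uniqueness. I do not anticipate any real obstacle: the only points requiring care are the collapse of $M(u,v)$ to $d_\theta(u,v)$, which is immediate, and the strict negativity $\zeta(t,t)<0$ for $t>0$ recorded in Remark~\ref{r1}.
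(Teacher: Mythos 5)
Your proposal is correct and follows essentially the same route as the paper's own proof: both evaluate $M(u,v)=d_\theta(u,v)$ using $Tu=u$, $Tv=v$, then derive the contradiction $0\le\zeta(d_\theta(u,v),d_\theta(u,v))<0$ from Remark~\ref{r1}. Your observation that no appeal to $(B4)$ or $(\theta3)$ is needed (unlike in Lemma~\ref{l2}) is accurate and matches what the paper actually does.
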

\begin{proof}
Let $u \in X$ be any fixed point of $T$.Suppose $v \in X$ be another fixed point of $T$ with $u \neq v$. 
This means that $Tu=u$ and $Tv=v.$

From Definition \ref{d5} and using the previous fact, we observe that
\begin{eqnarray*}
 M(u,v) & = & max\{d_\theta(u,v),d_\theta(u,Tu),d_\theta(v,Tv)\}\\
  & = & max\{d_\theta(u,v),d_\theta(u,u),d_\theta(v,v)\}\\
  & = & d_\theta(u,v).
\end{eqnarray*}
Since $T$ is a modified $\mathcal{Z}-$contraction with respect to $\zeta \in \mathcal{Z},$ we attain that
\begin{eqnarray*}
 0 & \leq & \zeta(d_\theta(Tu,Tv),M(u,v))\\
& = & \zeta(d_\theta(Tu,Tv),d_\theta(u,v))\\
& = & \zeta(d_\theta(u,v),d_\theta(u,v)).\\
\end{eqnarray*} 
Considering Lemma \ref{r1}, above inequality reaches a contradiction and hence proves result.

\end{proof}
Now, we are ready to state our another main result here.
\begin{theorem}\label{t2}
Let $(X,d_\theta)$ be any complete $\theta-$metric space and $T:X \rightarrow X$ is a modified $\mathcal{Z}-$contraction with respect to $\zeta \in \mathcal{Z}$. Then $T$ has a unique fixed point $u$ in $X$ and for every $x_0 \in X,$ the Picard sequence $\{x_n\}$ converges to the fixed point of $T$.
\end{theorem}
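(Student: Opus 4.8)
The plan is to mimic, stage by stage, the proof of Theorem~\ref{t1}: asymptotic regularity of the Picard iterates, their boundedness, the Cauchy property, and identification of the limit as the fixed point, with uniqueness quoted from Lemma~\ref{l3}. The one structural change is that the second argument of $\zeta$ is now $M(x,y)$, so at every stage one must also control the terms $d_\theta(x,Tx)$ and $d_\theta(y,Ty)$ inside $M$. A useful preliminary, in the spirit of Remark~\ref{r1}, is that whenever $d_\theta(Tx,Ty)>0$ the defining inequality $0\le\zeta(d_\theta(Tx,Ty),M(x,y))$ forces $d_\theta(Tx,Ty)<M(x,y)$ (here $M(x,y)>0$ too, and $\zeta(t,s)<0$ for $t\ge s>0$). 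Now fix $x_0\in X$, put $x_n=Tx_{n-1}$, and assume $x_n\neq x_{n+1}$ for all $n$ (otherwise an iterate is already a fixed point). With $d_n=d_\theta(x_n,x_{n+1})$ one has $M(x_{n-1},x_n)=\max\{d_{n-1},d_n\}$, so $0\le\zeta(d_n,\max\{d_{n-1},d_n\})$; the case $d_n\ge d_{n-1}$ gives $0\le\zeta(d_n,d_n)<0$ by $(\zeta2)$, impossible, hence $d_n<d_{n-1}$ for all $n$. Thus $\{d_n\}$ decreases to some $r\ge0$; if $r>0$ then $\lim_n d_n=\lim_n d_{n-1}=r>0$ and $(\zeta3)$ yields $\limsup_n\zeta(d_n,d_{n-1})<0$, contradicting $0\le\zeta(d_n,d_{n-1})$. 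Hence $r=0$, i.e. $T$ is asymptotically regular at $x_0$.

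For boundedness I would argue by contradiction, reproducing the construction in Theorem~\ref{t1}: a subsequence $\{x_{n_k}\}$ with $d_\theta(x_{n_{k+1}},x_{n_k})>1$ and $d_\theta(x_m,x_{n_k})\le 1$ for $n_k\le m\le n_{k+1}-1$. Using $(\theta3)$, $(B1)$, $(B4)$, the monotonicity of $\theta$ coming from $(B2)$, continuity of $\theta$, and asymptotic regularity, one obtains as there that $d_\theta(x_{n_{k+1}},x_{n_k})\to1$ and $d_\theta(x_{n_{k+1}-1},x_{n_k-1})\to1$. The one new point: in $M(x_{n_{k+1}-1},x_{n_k-1})=\max\{d_\theta(x_{n_{k+1}-1},x_{n_k-1}),d_\theta(x_{n_{k+1}-1},x_{n_{k+1}}),d_\theta(x_{n_k-1},x_{n_k})\}$ the last two entries tend to $0$ by asymptotic regularity, while $d_\theta(x_{n_{k+1}},x_{n_k})>1$ together with $d_\theta(Tx_{n_{k+1}-1},Tx_{n_k-1})<M(x_{n_{k+1}-1},x_{n_k-1})$ forces $M(x_{n_{k+1}-1},x_{n_k-1})>1$; hence for large $k$ that maximum is $d_\theta(x_{n_{k+1}-1},x_{n_k-1})$, so $M(x_{n_{k+1}-1},x_{n_k-1})\to1$. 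Applying $(\zeta3)$ to $t_k=d_\theta(Tx_{n_{k+1}-1},Tx_{n_k-1})=d_\theta(x_{n_{k+1}},x_{n_k})$ and $s_k=M(x_{n_{k+1}-1},x_{n_k-1})$, both $\to1>0$, contradicts $0\le\zeta(t_k,s_k)$. So $\{x_n\}$ is bounded.

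For the Cauchy property I would again follow Theorem~\ref{t1}: set $C_n=\sup\{d_\theta(x_i,x_j):i,j\ge n\}$, finite by boundedness, with $C_n\downarrow C\ge0$; suppose $C>0$ and choose $m_k>n_k\ge k$ with $C_k-\tfrac1k<d_\theta(x_{m_k},x_{n_k})\le C_k$, so $d_\theta(x_{m_k},x_{n_k})\to C$. Two nested uses of $(\theta3)$ together with $(B4)$ and asymptotic regularity give $\limsup_k d_\theta(x_{m_k-1},x_{n_k-1})\le C$; on the other hand $d_\theta(x_{m_k},x_{n_k})>C/2>0$ eventually, so $d_\theta(x_{m_k},x_{n_k})<M(x_{m_k-1},x_{n_k-1})$, and since the two iterate-gap entries of $M(x_{m_k-1},x_{n_k-1})$ tend to $0$ we get $M(x_{m_k-1},x_{n_k-1})=d_\theta(x_{m_k-1},x_{n_k-1})$ for large $k$, whence $\liminf_k d_\theta(x_{m_k-1},x_{n_k-1})\ge C$. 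Thus $d_\theta(x_{m_k-1},x_{n_k-1})\to C$ and $M(x_{m_k-1},x_{n_k-1})\to C$, and $(\zeta3)$ with $t_k=d_\theta(x_{m_k},x_{n_k})$, $s_k=M(x_{m_k-1},x_{n_k-1})$ contradicts $0\le\zeta(t_k,s_k)$. So $C=0$, $\{x_n\}$ is Cauchy, and by completeness $x_n\to z$ for some $z\in X$.

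It remains to show $Tz=z$. Suppose not, so $d_\theta(z,Tz)>0$. I would first record that $x_n\to z$ implies $d_\theta(x_n,w)\to d_\theta(z,w)$ for every $w\in X$: by $(\theta3)$, continuity of $\theta$, $(B1)$ and $(B4)$, $\limsup_n d_\theta(x_n,w)\le\theta(0,d_\theta(z,w))\le d_\theta(z,w)$, while along a subsequence realizing $\liminf_n d_\theta(x_n,w)=:L$ one has $d_\theta(z,w)\le\theta(0,L)\le L$. In particular $t_n:=d_\theta(Tx_n,Tz)=d_\theta(x_{n+1},Tz)\to d_\theta(z,Tz)$; moreover $d_\theta(x_n,z)\to0$ and $d_\theta(x_n,Tx_n)=d_n\to0$, so $s_n:=M(x_n,z)=\max\{d_\theta(x_n,z),d_n,d_\theta(z,Tz)\}\to d_\theta(z,Tz)$. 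Both $t_n$ and $s_n$ tend to the positive number $d_\theta(z,Tz)$, so $(\zeta3)$ gives $\limsup_n\zeta(t_n,s_n)<0$, contradicting $0\le\zeta(t_n,s_n)$; hence $Tz=z$, and uniqueness follows from Lemma~\ref{l3}. The step I expect to be the real obstacle is this last one: for a $\mathcal{Z}$-contraction one bounds $\zeta(d_\theta(Tx_n,Tz),d_\theta(x_n,z))$ by $d_\theta(x_n,z)-d_\theta(x_{n+1},Tz)$ via $(\zeta2)$ and exploits $d_\theta(x_n,z)\to0$, but here the second argument is $M(x_n,z)\to d_\theta(z,Tz)\neq0$, so $(\zeta2)$ only yields $0\le0$; one is pushed onto $(\zeta3)$, which makes the continuity-type fact $d_\theta(x_n,\cdot)\to d_\theta(z,\cdot)$ indispensable. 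The boundedness step is the next most delicate, since $T$ need no longer be non-expansive and the entries of $M$ must be tracked explicitly.
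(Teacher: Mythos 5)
Your proposal is correct and follows the same architecture as the paper's proof (monotone decrease of $d_\theta(x_n,x_{n+1})$, boundedness via the minimal-index subsequence, the Cauchy argument with $C_n=\sup\{d_\theta(x_i,x_j):i,j\ge n\}$, and uniqueness quoted from Lemma \ref{l3}), but at the decisive last step you take a genuinely different route, and it is the right one. The paper shows $z=\lim x_n$ is fixed by copying the estimate from Theorem \ref{t1}: it bounds $\zeta(d_\theta(Tx_n,Tz),M(x_n,z))$ above by $M(x_n,z)-d_\theta(x_{n+1},Tz)$ via $(\zeta2)$ and claims the $\limsup$ equals $-d_\theta(z,Tz)$. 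That computation is only valid when the second argument tends to $0$; here $M(x_n,z)=\max\{d_\theta(x_n,z),d_\theta(x_n,Tx_n),d_\theta(z,Tz)\}\to d_\theta(z,Tz)>0$, so the upper bound tends to $0$, not to $-d_\theta(z,Tz)$, and no contradiction results -- the paper's step as written is a gap. Your repair, proving first that $d_\theta(x_n,w)\to d_\theta(z,w)$ from $(\theta3)$, $(B1)$, $(B4)$ and the continuity of $\theta$, and then applying $(\zeta3)$ to $t_n=d_\theta(x_{n+1},Tz)$ and $s_n=M(x_n,z)$, both tending to the positive number $d_\theta(z,Tz)$, is exactly what is needed and closes the argument. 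Your treatment of the other stages is also somewhat sharper than the paper's: you derive $d_n<d_{n-1}$ cleanly from $0\le\zeta(d_n,\max\{d_{n-1},d_n\})$ and $(\zeta2)$ (the paper asserts the monotonicity circularly), and in the boundedness and Cauchy stages you justify why the maximum defining $M$ is eventually attained at the first entry, which the paper leaves implicit. In short: same skeleton, but your version supplies a correction the paper's proof actually requires.
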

\begin{proof}
Let $(X, d_\theta)$ be a $\theta-$metric space and $T:X \rightarrow X$ be a modified $\mathcal{Z}-$contraction with respect to $\zeta \in \mathcal{Z}$. 

Let $x_0$ be any arbitrary point and $\{x_n\}$ be the respective Picard sequence, i.e., $x_n=Tx_{n-1}$ for all $n \in \mathbb{N}$. Now we suppose that $d_\theta(x_n,x_{n+1})> 0$ for all $n \in \mathbb{N}.$ Otherwise if there exists $n_p \in \mathbb{N}$ such that $x_{n_p}= x_{n_p+1}$, then $x_{n_p}$ is a fixed point of $T$ and we are done.

Next we define $d_{\theta}^{n}=d_\theta(x_n, x_{n+1})$.
Then, since, 
\begin{eqnarray*}
M(x_n,x_{n+1}) & = & max\{d_\theta(x_n,x_{n-1}),d_\theta(x_n,x_{n+1}),d_\theta(x_{n-1},x_n)\}\\
& = & max\{d_{\theta}^{n},d_{\theta}^{n-1}\}.
\end{eqnarray*}
Since $\{d_{\theta}^{n}\}$ is a decreasing sequence of reals, $d_{\theta}^{n} < d_{\theta}^{n-1}$ for all $n \in \mathbb{N}.$
So we get, 
\begin{eqnarray*}
0 & \leq & \zeta(d_\theta(Tx_n,Tx_{n-1}),M(x_n,Tx_{n-1}))\\
& = & \leq \zeta(d_{\theta}^{n},max\{d_{\theta}^{n},d_{\theta}^{n-1}\})\\
& = & \leq \zeta(d_{\theta}^{n},d_{\theta}^{n-1}).
\end{eqnarray*}
Now $\{d_{\theta}^{n}\}$ is a decreasing sequence of non-negative real numbers and hence is convergent. Let $lim_{n \rightarrow \infty}d_{\theta}^{n}= r.$
If $r> 0,$ we have, 
\begin{eqnarray*}
0 & \leq & limsup_{n \rightarrow \infty} \zeta(d_{\theta}^{n},d_{\theta}^{n-1})\\
& < & 0. 
\end{eqnarray*}
We arrive at a contradiction, so $r =0.$ Therefore $T$ has a fixed point.

We claim that the sequence $\{x_n\}$ is bounded. Reasoning by contradiction, we assume that, $\{x_n\}$ is unbounded. So, there exists a subsequence $\{x_{n_k}\}$ of $\{x_n\}$ such that $n_1=1$ and for each $k \in \mathbb{N}, n_{k+1}$ is minimum integer such that
\[d_\theta(x_{n_{k+1}},x_{n_k}) > 1\]
and \[d_\theta(x_m,x_{n_k}) \leq 1 ~~for ~~n_k \leq m \leq n_{k+1}-1.\]
Now, using the triangle inequality, we have
\begin{eqnarray}
1 & < & d_\theta(x_{n_{k+1}},x_{n_k}) \nonumber \\
& \leq & \theta(d_\theta(x_{n_{k+1}},x_{n_{k+1}-1}),d_\theta(x_{n_{k+1}-1},x_{n_k})) \nonumber \\
& \leq & \theta(d_\theta(x_{n_{k+1}},x_{n_{k+1}-1}),1).
\end{eqnarray}
By taking the limit as $k \rightarrow \infty$ on both sides of $(3.4)$ and using $(B4)$, we infer that,
 \[d_\theta(x_{n_{k+1}},x_{n_k}) \rightarrow 1.\]
Also, we have 
\begin{eqnarray*}
1 & < & d_\theta(x_{n_{k+1}},x_{n_k}) \\
& \leq & M(x_{n_{k+1}-1},x_{n_k-1})\\
& \leq & max\{d_\theta(x_{n_{k+1}-1},x_{n_k-1}),d_\theta(x_{n_{k+1}-1},x_{n_{k+1}}),d_\theta(x_{n_k-1},x_{n_k})\}\\
& \leq & max\{\theta(d_\theta(x_{n_{k+1}-1},x_{n_k}),d_\theta(x_{n_k},x_{n_k-1})),d_\theta(x_{n_{k+1}-1},x_{n_{k+1}}),d_\theta(x_{n_k-1},x_{n_k})\}\\
& \leq & max\{\theta(1,d_\theta(x_{n_k},x_{n_k-1})),d_\theta(x_{n_{k+1}-1},x_{n_{k+1}}),d_\theta(x_{n_k-1},x_{n_k})\}.
\end{eqnarray*}
As $k \rightarrow \infty$, we derive, $ 1 \leq M(x_{n_{k+1}-1},x_{n_k-1}) \leq 1.$

So, \[lim_{k \rightarrow \infty}M(x_{n_{k+1}-1},x_{n_k-1})=1.\]
As $T$ is a modified $\mathcal{Z}-$contraction with respect to $\zeta \in \mathcal{Z},$ we obtain 
\begin{eqnarray*}
0 & \leq & lim sup_{k \rightarrow \infty} \zeta(d_\theta(x_{n_{k+1}},x_{n_k}),M(x_{n_{k+1}},x_{n_k}))\\
& < & 0.
\end{eqnarray*}
This leads to a contradiction and hence $\{x_n\}$ is bounded.

Now we will show that  $\{x_n\}$ is Cauchy. For this, we consider the real sequence \[C_n= sup\{d_\theta(x_i,x_j: i,j \geq n)\}.\]
Note that $\{C_n\}$ is a decreasing sequence of non-negative reals. Thus there exists $C\geq 0$ such that $lim_{n\rightarrow \infty} C_n=C.$ Our claim is that $C=0.$ Let us suppose that $C>0.$
Then by the definition of $C_n$, for every $k \in \mathbb{N},$ there exists $n_k, m_k$ such that $m_k> n_k \geq k$ and
\begin{eqnarray*}
C_k - \frac{1}{k} & < d_\theta(x_{m_k},x_{n_k}) & \leq C_k. 
\end{eqnarray*}
Letting $k \rightarrow \infty$ in the above inequality, we get
\begin{eqnarray*}
lim_{k \rightarrow \infty}d_\theta(x_{m_k},x_{n_k})=C,
\end{eqnarray*}
and,
\begin{eqnarray*}
lim_{k \rightarrow \infty}d_\theta(x_{m_k-1},x_{n_k-1})=C.
\end{eqnarray*}
Now,
\begin{eqnarray*}
d_\theta(x_{m_k},x_{n_k}) & \leq & M(x_{m_k-1},x_{n_k-1})\\
& = &  max\{d_\theta(x_{m_k-1},x_{n_k-1}),d_\theta(x_{m_k-1},x_{m_k}),d_\theta(x_{n_k-1},x_{n_k})\},\\
& = &  max\{d_\theta(x_{m_k-1},x_{n_k-1}),d_\theta(x_{m_k-1},x_{m_k}),d_\theta(x_{n_k-1},x_{n_k})\}.
\end{eqnarray*}
Consequently, taking $k \rightarrow \infty$, we get, 
\[ lim_{k \rightarrow \infty} M(x_{m_k-1},x_{n_k-1})=C.\]
Now since $T$ is a modified $\mathcal{Z}-$contraction with respect to $\zeta \in \mathcal{Z},$ we derive that
\begin{eqnarray*}
0 & \leq & lim sup_{k \rightarrow \infty} \zeta(d_\theta(x_{m_k},x_{n_k}),M(x_{m_k-1},x_{n_k-1}))\\
& < & 0,
\end{eqnarray*}
which is a contradiction. As a result, $C=0$ and $\{x_n\}$ is Cauchy.

Since $(X,d_\theta)$ is complete, there exists some $z \in X$ such that $lim_{n \rightarrow \infty} x_n=z.$ Now we show that $z$ is a fixed point of $T.$
Suppose, on the contrary, $Tz \neq z.$ Then $d_\theta(z,Tz)>0.$
Now we employ Definition \ref{d5} and use Remark \ref{r1} to get,
\begin{eqnarray*}
0 & \leq & lim sup_{n \rightarrow \infty} \zeta(d_\theta(Tx_n,Tz),M(x_n,z))\\
& \leq & lim sup_{n \rightarrow \infty} [M(x_n,z)-d_\theta(x_{n+1},Tz)]\\
& = & -d_\theta(z,Tz).
\end{eqnarray*}
This contradiction attests that $d_\theta(z,Tz)=0,$ and so, $Tz=z.$ Thus $z$ is a fixed point of $T.$ Uniqueness is guaranteed from Lemma \ref{l3}.
\end{proof}
As an application of our earlier result, we furnish the next example which illustrates Theorem \ref{t2}.
\begin{example}
Let $X= [0,1]$ be equipped with the usual Euclidean metric and $\theta(s,t)=s+t+st.$
We define a mapping $T: X \rightarrow X$ by
\begin{eqnarray*}
Tx & = & 
\begin{cases}
\frac{2}{9},~~ x \in S_1=[0,\frac{1}{2}), \\
\frac{1}{9},~~  x \in S_2=[\frac{1}{2},1].
\end{cases}
\end{eqnarray*}
We argue that $T$ is a modified $\mathcal{Z}-$contraction with respect to the simulation function $\zeta(t,s)= \frac{1}{2}s - t.$

Here we have, $0 \leq d_\theta(Tx,Ty) \leq \frac{1}{9}$ for all $x,y \in X.$

Now, if both $x, y \in S_1$ or $S_2$, then $d_\theta(Tx,Ty)=0$ and we are done.

Otherwise, let $x \in S_1$ and $y \in S_2$.

We get $0 < d_\theta(x,y) \leq 1$. Also, $\frac{2}{9} \leq d_\theta(x,Tx) \leq \frac{5}{9}$ and $\frac{7}{18} \leq d_\theta(y,Ty) \leq \frac{8}{9}.$
Therefore $M(x,y) \geq \frac{7}{18}.$
From the calculation, it is clear that \[d_\theta(Tx,Ty) \leq \frac{1}{2}M(x,y).\]
So we have, \[ \zeta(d_\theta(Tx,Ty),M(x,y))=  \frac{1}{2}M(x,y)- d_\theta(Tx,Ty) \geq 0\] for all $x,y \in X.$

As a consequence $T$ is a modified $\mathcal{Z}-$contraction. Taking into account Theorem \ref{t2}, we can say that $T$ has a unique fixed point. Here $u= \frac{2}{9}$ is that required fixed point.
\end{example}

\begin{Acknowledgement}
The first named author would like to convey his cordial thanks to DST-INSPIRE, New Delhi, India for their financial support under INSPIRE fellowship scheme.
\end{Acknowledgement}

\end{document}